\def\MR#1{}
\newtheorem{theorem}{Theorem}
\newtheorem{lemma}[theorem]{Lemma}
\newtheorem{corollary}[theorem]{Corollary}
\theoremstyle{definition}
\newtheorem{definition}{Definition}
\newtheorem{example}[]{Example}
\newcommand{\PID}{\textup{PID}}
\newcommand{\LCM}{\operatorname{LCM}}
\newcommand*{\Z}{\mathbb{Z}}
\newcommand*{\Q}{\mathbb{Q}}
\newcommand*{\hk}{H_1(\widetilde{X}_K, \Q)}
\newcommand*{\qt}{\Q[t, t^{-1}]}
\DeclareMathOperator{\GL}{\operatorname{GL}}
\begin{document}
\title{The computation of higher order Alexander invariants}

\author{Charles Livingston}
\address{Department of Mathematics, Indiana University, Bloomington, IN 47405}
\email{\href{mailto:livingst@iu.edu}{livingst@iu.edu}}

\begin{abstract} We present an algorithm that efficiently computes higher order Alexander polynomials, $\Delta_{K,i}(t) \in \Z[t]$ ($i \ge 1$), for knots with as many as 100 crossings. These polynomials were first defined by Alexander in 1928. The first, $\Delta_{K,1}(t)$, is \textit{the} Alexander polynomial. From a modern perspective, these are normalized products of invariant factors of the rational homology of the infinite cyclic cover of the complement of $K$ regarded as a module over $\Q[t,t^{-1}]$.

In theory, computing higher order Alexander polynomials can be done with elementary linear algebra methods that convert a presentation matrix for the homology into Smith normal form. In practice, this becomes computationally infeasible for   knots with as few as 16 crossings. 

Related SageMath programs with documentation are available on the author's GitHub site.
\end{abstract}

\maketitle

\section{Introduction}

Let $\widetilde{X}_K$ denote the infinite cyclic cover of the complement of a knot $K \subset S^3 $. The homology group $\hk$ is a finitely generated torsion module over the principal ideal domain $\qt$. Multiplication by $t$ corresponds to the map on homology induced by the deck transformation. For details on this and related aspects of knot theory, see a knot theory reference such as~\cite{MR3156509, MR521730, MR1472978}.

Every finitely generated modules over a \PID{} admits an \textit{invariant factor decomposition}. In particular, $\hk$ is the finite direct sum of cyclic modules:
\[ \hk \cong \frac{\qt}{\langle \delta_{K,1}(t) \rangle } \oplus \cdots \oplus \frac{\qt}{\langle \delta_{K,r}(t) \rangle } . \]
The fact that $\hk$ is torsion implies that $\langle \delta_i(t) \rangle $ is a proper ideal for all $i$ unless $\hk$ is trivial, in which case $r=1$ and $\delta_{K,1}(t) = 1$.
Henceforth, we simplify the notation by suppressing $K$ in subscripts. This decomposition is unique given the condition that for all positive $i$, with $i <r$, $\langle \delta_i(t) \rangle \subset \langle \delta_{i+1} (t) \rangle$.

For a pair of non-zero elements $f(t), g(t) \in \qt$, one has $\langle f (t) \rangle = \langle g(t) \rangle$ if and only if $f(t) = at^mg(t)$ for some $a \ne 0 \in \Q$ and $m \in \Z$. This permits us to choose a normalization for the generators. Henceforth, $\delta_i(t)$ will denote the generator satisfying the following properties: (1) $\delta_i(t) \in \Z[t]$ is primitive; and (2) $\delta_i(0) >0.$ The nesting of the ideals is equivalent to the condition that $\delta_{i+1}(t)$ divides $\delta_{i}(t)$ for $1 \le i <r$. In some references, the definition of the $\delta_i$ is left slightly ambiguous, defined to be in $\Z[t,t^{-1}]$ and well-defined up to multiplication by a unit, $\pm t^m$ for some $m \in \Z$.

\begin{definition} We define:
\begin{itemize}
\item The polynomials $\delta_i(t)$ are the \textit{invariant factors} of $K$.\smallskip
\item The \textit{Alexander polynomial} $\Delta(t)$ is $\Delta(t) = \prod_{j = 1}^r \delta_j(t)$.\smallskip
\item The \textit{higher order Alexander polynomials} are given by $\Delta_i(t) = \prod_{j = i}^r \delta_j(t)$.
\end{itemize}
\end{definition}

\begin{example} For the knot $8_{18}$ one has
\[ \hk \cong \frac{\qt}{\langle (t^2-t+1)(t^2-3t+1) \rangle } \oplus \frac{\qt}{\langle t^2 - t+1 \rangle},\]
so
\[ \Delta(t) = \Delta_1(t) = (t^2 - t+1)^2(t^2 - 3t +1)
\qquad \text{and} \qquad
\Delta_2(t) = t^2 - t +1 .\]

\end{example}

\noindent{\textbf{Convention.}} We define $\delta_i(t) = \Delta_i(t) = 1$ for all $i > r$. \smallskip

In general, the computation of the invariant factor decomposition, and thus the values of the $\Delta_i(t)$, is time-intensive: it entails diagonalizing a presentation matrix over the ring $\qt$. The exception is $\Delta_1(t) $, which can be computed as the determinant of a standard square presentation matrix $A_K$ of $\hk$. Beyond that, if standard diagonalization tools, such as \verb|smith_form()| in SageMath~\cite{SageMathManual}, are used, they begin to fail for knots with as few as 16 crossings. Our observation in this note is that much faster algebraic computations determine the values of the $\Delta_i(t)$ in the vast majority of cases. The following result, an immediate consequence of Theorem~\ref{thm-main} and Corollary~\ref{cor-main}, is sufficient to determine the Alexander polynomials of all 1.7 million prime knots of 16 or fewer crossings. In extensive testing of randomly generated knots of up to 100 crossings it has not failed to determine the higher order Alexander polynomials.

\begin{theorem}\label{thm0} Let $A_K$ be a square presentation matrix for $\hk$. Suppose that each irreducible factor $\phi_i$ of $\Delta(t) = \det(A_K)$ has exponent at most 6. Then the Alexander polynomials $\Delta_i(t)$ are determined by: (1) the ranks of the matrices formed by reducing $A_K$ to the number fields $\Q(\omega_i)$ where $\omega_i$ is a root of $\phi_i$; and (2) the $\LCM{}$ of the denominators of the reduced entries of the inverse $A_K^{-1} \in \GL(\Q(t))$.
\end{theorem}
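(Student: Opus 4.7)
\medskip
\noindent \textbf{Proof plan.} The plan is to reduce the problem to one irreducible factor at a time and to extract the $\phi$-primary partition of invariants from the two pieces of data supplied. Since $\qt$ is a PID, the invariant factors $\delta_j(t)$ are determined by their $\phi$-exponents as $\phi$ ranges over the irreducible factors of $\Delta(t)$: writing $e_j^\phi$ for the exponent of $\phi$ in $\delta_j(t)$, one has $e_1^\phi \ge e_2^\phi \ge \cdots$, and recovering all of the $\Delta_i(t)$ is equivalent to recovering, for every such $\phi$, the decreasing partition $(e_1^\phi, e_2^\phi, \ldots)$.

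I would then show that the two ingredients in the statement furnish three numerical invariants of this partition for each $\phi$. The total $a_\phi = \sum_j e_j^\phi$ is the exponent of $\phi$ in $\det(A_K)$. The number of nonzero parts $s_\phi$ equals the $\Q(\omega)$-nullity of the reduction of $A_K$ modulo $\phi$: tensoring the presentation $\qt^n \xrightarrow{A_K} \qt^n$ with $\qt/\langle \phi \rangle = \Q(\omega)$ contributes one $\Q(\omega)$-summand for each invariant factor divisible by $\phi$. The largest part $e_1^\phi$ is the exponent of $\phi$ in the LCM of the denominators of the reduced entries of $A_K^{-1}$. This last identification is the algebraic heart of the argument: from $A_K^{-1} = \operatorname{adj}(A_K)/\Delta(t)$ and the standard fact that the GCD of the $(n-1)\times(n-1)$ minors of a square presentation matrix over a PID equals the product of all invariant factors except the top one, namely $\Delta/\delta_1$, reducing each matrix entry to lowest terms and taking the LCM of the denominators yields $\Delta/(\Delta/\delta_1) = \delta_1$, which has $\phi$-exponent exactly $e_1^\phi$.

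A short combinatorial step then finishes the proof. Given $a_\phi$, $s_\phi$, and $e_1^\phi$, the decreasing partition $(e_1^\phi, \ldots, e_{s_\phi}^\phi)$ of $a_\phi$ is uniquely determined whenever $a_\phi \le 6$, as one verifies by enumerating the partitions of $1, 2, \ldots, 6$ and checking that the triple $(a, s, e_1)$ distinguishes them. The bound $6$ is sharp: at $a = 7$ the partitions $(3,3,1)$ and $(3,2,2)$ share $(a, s, e_1) = (7, 3, 3)$, which both explains the hypothesis in the theorem and foreshadows the additional work that a finer result would require. The main obstacle is the denominator-LCM identification, which relies on the determinantal-divisor description of the Smith normal form; the remaining steps are either immediate from the structure theorem for finitely generated modules over a PID or a finite enumeration.
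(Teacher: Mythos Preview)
Your proposal is correct and structurally identical to the paper's: reduce to one irreducible factor $\phi$ at a time, extract from the given data the three numbers (total exponent $e^*$ of $\phi$ in $\Delta$, number $r$ of parts via the $\Q(\omega)$-nullity, and largest part $d_1$ via the $\LCM$ of denominators of $A_K^{-1}$), and then check by enumeration that a partition of an integer $\le 6$ is determined by its length and largest part. You even note the sharpness at $7$ via $(3,3,1)$ versus $(3,2,2)$, exactly as the paper does.

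The one substantive difference is in how you justify that the $\LCM$ of the reduced denominators of $A_K^{-1}$ equals $\delta_1$. The paper argues by first observing that this is obvious for the Smith diagonal form $D$ and then showing that multiplying by matrices in $\GL(\qt)$ leaves the $\LCM$ of denominators of the inverse unchanged. You instead use the adjugate formula $A_K^{-1}=\operatorname{adj}(A_K)/\Delta$ together with the determinantal-divisor fact that the $\gcd$ of the $(n-1)\times(n-1)$ minors is $\Delta/\delta_1$, and then the identity $\LCM_{ij}\bigl(\Delta/\gcd(M_{ij},\Delta)\bigr)=\Delta/\gcd_{ij}(M_{ij},\Delta)=\delta_1$. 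Both arguments are short and standard; yours is slightly more explicit about why the answer is exactly $\delta_1$, while the paper's invariance argument generalizes cleanly to any $\GL(\qt)$-equivalent presentation without invoking minors.
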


\smallskip
\noindent{\textbf{Speed.}} A program that takes advantage of Theorem~\ref{thm0} computes the Alexander invariants of all roughly 3,000 prime knots of at most 12 crossings in under 3 seconds. The full list of prime knots of at most 13--crossing prime knots is computed in under 15 seconds. At 16 crossings, the program computes the Alexander invariants of the 1.7 million knots in just over an hour. This corresponds to a rate exceeding 400 knots per second. Random samples of high crossing knots were generated with SnapPy~\cite{snappy}; at 65 crossings the process slows down to about 1 second for each knot and at 100 crossings it takes a bit over 10 seconds per knot. Further details appear in Section~\ref{sec-compute} and Table~\ref{table-time}.

\smallskip
\noindent\textbf{The challenge of computing $\Delta_i(t)$.}
Computer algebra systems have built-in programs for diagonalizing a matrix $M$ over a polynomial ring. In SageMath, the
command M.\verb|smith_form()| yields the diagonalization $D$ along with matrices $U$ and $V$ for which $UMV = D$. The difficulty is that diagonalization is memory intensive and slow. For instance, for a specific 12--crossing knot with degree 6 Alexander polynomial, the diagonalizing matrices $U$ and $V$ produced by \verb|smith_form()| had entries that involve polynomials of degree up to 30 and the coefficients of these polynomials have numerators and denominators as large as $10^{70}$.

If one has a fast program for applying Seifert's algorithm to build a Seifert matrix $V_K$ from the PD notation for the knot, smaller matrices arise than if generated using Alexander's original approach. This results in some speed gain in the diagonalization process, but the Seifert matrices are no longer sparse. Initial experiments indicate that this approach might enable one to effectively compute the Alexander invariants of knots through perhaps 25 or 30 crossings using Sage.

\smallskip

\noindent{\em Available software.} Programs for computing Alexander invariants are available at~\cite{LivingstonGitHub}.

\smallskip

\noindent{\em Acknowledgements.}
The author made limited use of AI tools (ChatGPT, Perplexity) for proofreading and code-review assistance.


\section{Primary decompositions and the main theorem}

\subsection{Basic facts about the $\delta_i(t)$ and $\Delta_i(t)$}
The module $\hk$ has two standard presentation matrices. The first was given by Alexander~\cite{MR1501429}:
for an $(n+1)$--crossing diagram of $K$, the Alexander matrix $A_K$ is of size $n\times n$.  (In~\cite{MR1501429} the matrix is initially defined to be $(n+1) 
\times n$, and it is shown that any $n \times n$ submatrix is sufficient for computing the Alexander polynomial.  Seifert~\cite{MR1512955, MR0035436} developed an alternative presentation matrix: if $V$ is a Seifert matrix for $K$, then $V - tV^{\sf T}$ is a presentation matrix for $A_K$. In general, we will refer to any square presentation matrix of $ \hk$ as $A_K$. The size of $A_K$ will always be denoted $n \times n$. We will call any square presentation matrix an \textit{Alexander matrix}.

The next result states some elementary facts about the polynomials $\delta_i(t)$ and $\Delta_i(t)$.
\begin{theorem} For all $i$, the polynomials $\delta_i(t) \in \Z[t]$ and $\Delta_i(t) \in \Z[t]$ satisfy $ \delta_i(1) = \pm 1$ and $\Delta_i(1) = \pm 1$; in particular, they are primitive. They are symmetric:
\[\delta_i(t) = t^m\delta_i(t^{-1})\]
for some $m$ and $\Delta_i(t) = t^k\Delta_i(t^{-1})$ for some $k$.
\end{theorem}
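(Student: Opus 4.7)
The plan is to reduce everything to the two classical facts about the first Alexander polynomial $\Delta(t) = \Delta_1(t)$, namely $\Delta(1) = \pm 1$ and $\Delta(t) = t^k \Delta(t^{-1})$, and then pass the statements down to each invariant factor using the fact that $\Delta(t) = \prod_{j=1}^r \delta_j(t)$ and the uniqueness of the invariant factor decomposition.

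\textbf{Evaluation at $t=1$ and primitivity.} First I would recall that for a Seifert matrix $V$ the matrix $V - tV^{\sf T}$ is a presentation matrix for $\hk$, so $\Delta(t) \doteq \det(V - tV^{\sf T})$ and $\Delta(1) = \det(V - V^{\sf T}) = \pm 1$, because $V - V^{\sf T}$ represents the intersection pairing on a Seifert surface, which is a unimodular symplectic form. Since $\Delta(t) = \prod_{j} \delta_j(t)$ and each $\delta_j(1) \in \Z$, the equality $\prod_j \delta_j(1) = \pm 1$ forces each $\delta_j(1) = \pm 1$. Hence $\Delta_i(1) = \prod_{j \ge i} \delta_j(1) = \pm 1$ as well. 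Primitivity of $\delta_i(t)$ and $\Delta_i(t)$ is then immediate, since the content of an integer polynomial divides its value at $1$.

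\textbf{Symmetry.} The key input is the classical duality isomorphism of $\qt$-modules
\[ \hk \;\cong\; \overline{\hk}, \]
where $\overline{M}$ denotes the module with the same underlying $\Q$-vector space but with the $t$-action replaced by $t^{-1}$-action. This isomorphism comes from Poincar\'e/Blanchfield duality on the infinite cyclic cover (equivalently, one can see it directly: the transpose $V^{\sf T} - tV = -t(V - t^{-1}V^{\sf T})$ presents $\overline{\hk}$ up to the unit $-t$, so the two modules are isomorphic). The invariant factors of $\overline{\hk}$ are precisely the polynomials $\delta_i(t^{-1})$, viewed as generators of ideals in $\qt$. By uniqueness of the invariant factor decomposition, $\langle \delta_i(t) \rangle = \langle \delta_i(t^{-1}) \rangle$ in $\qt$, so there exist $c_i \in \Q^\times$ and $m_i \in \Z$ with $\delta_i(t) = c_i t^{m_i} \delta_i(t^{-1})$. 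Substituting $t = 1$ gives $\delta_i(1) = c_i \delta_i(1)$, and since $\delta_i(1) = \pm 1 \ne 0$, we conclude $c_i = 1$ and $\delta_i(t) = t^{m_i} \delta_i(t^{-1})$. The symmetry of $\Delta_i(t)$ then follows by multiplying: $\Delta_i(t) = \prod_{j \ge i} t^{m_j} \delta_j(t^{-1}) = t^{k} \Delta_i(t^{-1})$ with $k = \sum_{j \ge i} m_j$.

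\textbf{Anticipated obstacle.} The only nontrivial ingredient is the duality $\hk \cong \overline{\hk}$; the rest is formal. I would treat this either by invoking Blanchfield duality from a standard reference such as those cited in the introduction, or, more concretely, by giving the one-line verification via a Seifert matrix presentation, namely that $V - tV^{\sf T}$ and its transpose present the same module, and the transpose equals $-t\bigl(V - t^{-1}V^{\sf T}\bigr)$. Everything else reduces to the identity $\Delta(t) = \prod \delta_j(t)$ and the uniqueness of invariant factors.
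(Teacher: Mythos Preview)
Your proof is correct, and for the $\delta_i(1)=\pm 1$ and primitivity claims it runs parallel to the paper's (both use the Seifert presentation and $\det(V-V^{\sf T})=\pm 1$; one small gap is that from $\Delta(t)\doteq\det(V-tV^{\sf T})$ you cannot directly evaluate at $t=1$ without first pinning down the unit --- the paper handles this by noting both sides are primitive in $\Z[t]$, forcing the rational constant to be $\pm 1$). For symmetry the two arguments genuinely diverge. The paper works with Fitting ideals: it identifies $\Delta_i(t)$ with the $\LCM$ of the $(n-i+1)$--minors of any square presentation matrix, observes this is unchanged under multiplication by invertible matrices, and then uses the transpose identity $(V-tV^{\sf T})^{\sf T}=-t(V-t^{-1}V^{\sf T})$ to deduce symmetry of $\Delta_i$ first, extracting symmetry of $\delta_i$ afterwards. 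You instead argue at the module level: the same transpose identity yields $\hk\cong\overline{\hk}$, so uniqueness of invariant factors gives $\langle\delta_i(t)\rangle=\langle\delta_i(t^{-1})\rangle$, and evaluating at $1$ pins the unit to $1$; symmetry of $\Delta_i$ then follows by multiplication. Your route is more conceptual and reaches the $\delta_i$ directly, but it tacitly uses that over a \PID{} a square matrix and its transpose present isomorphic modules; the paper's minor-based route avoids that step and stays purely matrix-theoretic.
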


\begin{proof} Work with the presentation matrix $A_K = V - tV^{\sf T}$ for a Seifert matrix $V$. The determinant $\det(V - V^{\sf T})$ is 1, and hence $\det(A_K) \in \Z[t]$ evaluates to 1 at $t=1$. In particular, it is primitive. The product of the $\delta_i(t)$ equals $\det(A_K)$ modulo a unit, of the form $\frac{a}{b}t^m$, so we find $\prod \delta_i(t) = \frac{a}{b}t^m \det(A_K)$ for some $a, b, m \in \Z$. Clearly, $b = \pm 1$. Since each $\delta_i(t)$ is by definition primitive, it must be that $a= \pm 1$. Thus, $\delta_i(1) = \pm 1$ for all $i$. Products of primitive polynomials are primitive, so each $\Delta_i(t)$ is primitive and $\Delta_i(1) = \pm 1$ for all $i$.

To prove symmetry, we proceed as follows. The primary decomposition theorem for a \PID{} can be proved by showing that there are invertible matrices $U, V\in \GL(\qt)$ such that $U A_K V$ is a diagonal matrix, $D$, having diagonal entries a set of polynomials $\delta_i(t)$ as above. It follows that $\Delta_1(t) = \pm t^k \det(A_K)$ for some integer $k$. It is evident that $\Delta_i(t)$ is the $\LCM$ of the $(n - i+1)$ minors of $D$. An easy linear algebra exercise proves that multiplication by invertible matrices, here $U$ and $V$, does not change the set of values of the minors.

We have
\[
(V - t V^{\sf T})^{\sf T} \;=\; t^n \bigl(V - t^{-1} V^{\sf T}\bigr).
\]
Transposing a matrix does not change its determinant or the collection of its minors of any given size. The symmetry of the $\Delta_i(t)$ is a consequence. That is,
\[\Delta_i(t) = t^m \Delta_i(t^{-1})
\] for some $m$. The symmetry of the $\delta_i(t)$ follows from this.
\end{proof}

\subsection{The primary decomposition} For any irreducible $\phi \in \qt$, the $\phi$--torsion in $\hk$ will be denoted $\hk_\phi$. If $\hk_\phi $ is nontrivial, then there is a unique primary decomposition
\[ \hk_\phi \cong \frac{\qt}{\langle \phi^{d_1} \rangle } \oplus \cdots \oplus \frac{\qt}{\langle \phi^{d_r} \rangle } \]
for some $r>0$ and
for some sequence $d_1 \ge d_2 \cdots \ge d_r >0$. (Note that $r$ depends on $\phi$.) Determining the $\Delta_i(t)$ is equivalent to determining the $d_i$ for all $\phi$. The next theorem is proved in Section~\ref{sec-proof}.
\begin{theorem} \label{thm-main} Let $K$ be a knot with Alexander polynomial $\Delta$, Alexander matrix $A_K$, and $\phi$--primary decomposition with exponents $(d_1, \ldots , d_r)$.
\begin{enumerate}

\item There is a quotient map $\qt \to \qt/\langle \phi \rangle = \Q(\omega)$, where $\omega$ is a root of $\phi$. Let $A_{K,\phi}$ denote the image of $A_K$ in $\GL(\Q(\omega))$. Let $r^*$ denote the dimension of the kernel of $A_{K,\phi}$, viewed as an automorphism of $\Q(\omega)^n$. Then in the primary decomposition, $r = r^*$.\smallskip\item Let $e^*$ denote the exponent of $\phi$ in $\Delta$. Then $\sum_{i=1}^r d_i = e^*$.\smallskip
\item Let $d_1^*$ denote the exponent of $\phi$ in the $\LCM$ of the denominators of the reduced entries of $A_K^{-1} \in \GL(\Q(t))$. Then in the primary decomposition, $d_1 = d_1^*$.
\end{enumerate}
In summary, the vector $(d_1 , \ldots , d_r)$ is a partition of $e^*$ of length $r^*$ with maximum term $d_1^*$.
\end{theorem}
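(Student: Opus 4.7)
The plan is to pass to the Smith normal form of $A_K$ and track how each invariant factor contributes to the $\phi$--primary part. Choose $U, V \in \GL_n(\qt)$ with $UA_K V = D$ diagonal, whose entries are $\delta_1(t), \ldots, \delta_n(t)$ (padding with ones past the nontrivial range), so that $\hk \cong \bigoplus_{i=1}^n \qt/\langle \delta_i \rangle$. Let $a_i$ denote the exponent of $\phi$ in $\delta_i$. The divisibility chain $\delta_{i+1} \mid \delta_i$ forces $a_1 \ge a_2 \ge \cdots$, and the nonzero $a_i$ are precisely the sequence $(d_1, \ldots, d_r)$ that appears in the $\phi$--primary decomposition.

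For part (1), I would tensor the presentation $\qt^n \xrightarrow{A_K} \qt^n \to \hk \to 0$ over $\qt$ with $\Q(\omega) = \qt/\langle \phi \rangle$ to obtain
\[
\Q(\omega)^n \xrightarrow{A_{K,\phi}} \Q(\omega)^n \longrightarrow \hk \otimes_{\qt} \Q(\omega) \longrightarrow 0.
\]
The cokernel decomposes as $\bigoplus_i \qt/\langle \delta_i, \phi\rangle$; irreducibility of $\phi$ makes the $i$--th summand $\Q(\omega)$ exactly when $\phi \mid \delta_i$, and zero otherwise. Thus $\dim_{\Q(\omega)}\operatorname{coker}(A_{K,\phi}) = r$, and rank-nullity applied to a square endomorphism of $\Q(\omega)^n$ yields $\dim_{\Q(\omega)} \ker(A_{K,\phi}) = r$, establishing $r^* = r$. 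Part (2) is then immediate: comparing $\phi$--exponents on the two sides of $\det(A_K) = \pm t^k \prod_i \delta_i(t)$ gives $e^* = \sum_i a_i = \sum_i d_i$.

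Part (3) is the most interesting step, and I would handle it through the annihilator of $\hk$. The claim is that the \LCM{} $L$ of the denominators (in lowest terms) of the entries of $A_K^{-1}$ equals $\delta_1$ up to a unit in $\qt$. In one direction, the nested divisibility of the $\delta_i$ implies that $\delta_1$ annihilates $\hk$, so for each standard basis vector $e_j \in \qt^n$ there exists $v_j \in \qt^n$ with $A_K v_j = \delta_1 e_j$; assembling the $v_j$ into a matrix $B \in M_n(\qt)$ gives $A_K B = \delta_1 I$, so $\delta_1 A_K^{-1} = B$ has entries in $\qt$ and therefore $L \mid \delta_1$. Conversely, $L \cdot A_K^{-1}$ lies in $M_n(\qt)$ by the definition of $L$, and the identity $A_K \cdot (L A_K^{-1}) = L \cdot I$ exhibits $L$ as an annihilator of $\hk = \operatorname{coker}(A_K)$, forcing $\delta_1 \mid L$. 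Matching $\phi$--exponents in $L$ and $\delta_1$ then gives $d_1^* = a_1 = d_1$.

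I do not anticipate a serious technical obstacle. The one point requiring care is the identification of the annihilator of $\hk$ with $\delta_1$, but this is immediate from the invariant factor decomposition together with the nested ideal condition on the $\delta_i$; everything else is bookkeeping with the structure theorem over the \PID{} $\qt$ and the universal property of presentation matrices.
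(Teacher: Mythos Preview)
Your argument is correct. Parts (1) and (2) match the paper's approach almost verbatim: the paper simply diagonalizes $A_K$ and counts the diagonal entries divisible by $\phi$, which is the matrix incarnation of your tensor-with-$\Q(\omega)$ computation, and Part (2) is handled identically.

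For Part (3), however, you take a genuinely different route. The paper argues by invariance: it observes that for the diagonal form $D$ the $\LCM$ of the denominators of $D^{-1}$ is visibly $\delta_1$, and then checks that this $\LCM$ is unchanged under left and right multiplication by matrices in $\GL_n(\qt)$, since each entry of $(UXV)^{-1}=V^{-1}X^{-1}U^{-1}$ is a $\qt$--linear combination of entries of $X^{-1}$. You instead identify the $\LCM$ $L$ intrinsically as a generator of the annihilator ideal of $\hk$: the equations $A_K(LA_K^{-1})=L\cdot I$ and $A_K B=\delta_1 I$ give the two divisibilities $\delta_1\mid L$ and $L\mid \delta_1$ directly, without ever passing to Smith normal form for this step. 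Your approach is a bit more conceptual and explains \emph{why} the $\LCM$ coincides with $\delta_1$ (it is the module annihilator), whereas the paper's approach is purely matrix-theoretic and closer in spirit to how one would actually implement the computation. Both are short and complete.
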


\begin{corollary}\label{cor-main} If every irreducible factor of $\Delta$ has exponent at most 6, then Theorem~\ref{thm-main} determines all $\Delta_i(t)$.
\end{corollary}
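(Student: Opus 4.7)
The strategy is to reduce the determination of all $\Delta_i(t)$ to a purely combinatorial question about partitions, and then to verify the combinatorial claim by a small case check.

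First I would observe that knowing the polynomials $\Delta_i(t)$ for all $i$ is equivalent to knowing the invariant factors $\delta_i(t)$ for all $i$, and by the primary decomposition this in turn is equivalent to knowing, for each irreducible factor $\phi$ of $\Delta$, the sequence of exponents $(d_1, \ldots, d_r)$ appearing in the $\phi$-primary decomposition of $\hk_\phi$. Indeed, $\delta_i$ is the product $\prod_\phi \phi^{d_i(\phi)}$ where we set $d_i(\phi) = 0$ for $i$ larger than the number of $\phi$-primary summands.

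Next, I invoke Theorem~\ref{thm-main}: for each $\phi$ it produces the length $r^*=r$, the total $e^* = d_1 + \cdots + d_r$, and the maximum $d_1^* = d_1$ of the partition $(d_1, \ldots, d_r)$. Thus Theorem~\ref{thm-main} determines every $\Delta_i(t)$ if and only if, for each irreducible $\phi \mid \Delta$, the partition $(d_1,\ldots,d_r)$ of $e^*$ is uniquely determined by the pair consisting of its length $r$ and its largest part $d_1$. Under the hypothesis of the corollary, $e^* \le 6$, so it suffices to prove the following purely combinatorial claim.

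\textbf{Claim.} For every integer $n$ with $1 \le n \le 6$, a partition of $n$ is determined by its length together with its largest part.

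The proof of this claim is a finite check: enumerate all partitions of $n$ for $n = 1, 2, \ldots, 6$ and verify that the pairs (length, largest part) are pairwise distinct within each $n$. There are only $1+2+3+5+7+11 = 29$ partitions to list, grouped by $n$, and in each case distinctness is immediate from inspection. I would present this as a short table. (For context I would also remark that the bound $6$ is sharp: when $n = 7$ the partitions $(3,3,1)$ and $(3,2,2)$ share the same length and largest part, which is precisely why a separate analysis is needed in higher degree and why the corollary is stated with the bound $6$.)

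There is no real obstacle; the only content is the combinatorial verification in the claim, which is mechanical. The more substantive work is carried out in Theorem~\ref{thm-main}, whose three outputs $r^*, e^*, d_1^*$ supply exactly the data the claim shows to be sufficient.
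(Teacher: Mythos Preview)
Your proposal is correct and follows essentially the same route as the paper: reduce to the $\phi$-primary pieces, observe that Theorem~\ref{thm-main} pins down the length, sum, and largest part of the exponent partition, and then verify by direct enumeration that for $n\le 6$ these data determine the partition uniquely. The paper's proof is just a terser version of exactly this, listing the eleven partitions of $6$ and noting the smaller cases are handled the same way; your remark on the sharpness at $n=7$ matches the paper's subsequent discussion of limitations.
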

\begin{proof} Suppose the exponent of $\phi$ is 6. There are 11 integer partitions of $6$:
\[ [6], [5,1], [4,2], [3,3], [4,1,1], [3,2,1], [2,2,2], [3,1,1,1], [2,2,1,1], [2,1,1,1,1], [1,1,1,1,1,1] .\]

For a given length and maximum value there is a unique element in this list.

For exponents less than 6 the argument is the same and the lists of possibilities are shorter.
\end{proof}

\subsection{Example} There are 2,977 prime knots of at most 12 crossings. The Alexander polynomials for each was determined by the conditions (1) and (2) of Theorem~\ref{thm-main} with four exceptions: $10_{a99}$, $ 12_{n508}$, $12_{n604}$, $12_{n666}$. For each of these, the Alexander polynomial is $(t^2 - t +1)^4$ and the associated kernels are two-dimensional. Thus we have either $\delta_1(t) = (t^2 - t +1)^3$ and $\delta_2(t) = (t^2 - t +1)$, or $\delta_1(t) = (t^2 - t +1)^2$ and $\delta_2(t) = (t^2 - t +1)^2$. In terms of the Alexander polynomials, this leaves undetermined whether $\Delta_2(t) = t^2 - t +1$ or $\Delta_2(t) = (t^2 - t +1)^2$. Condition (3) determines these: for $ 10_{a99}$, $\Delta_2(t) = (t^2-t+1)^2$ and for the other three, $\Delta_2(t) = t^2-t+1$.

\subsection{ Limitations of Theorem~\ref{thm-main}} There are two length 3 partitions of 7 with maximum value 3: $[3,3,1]$ and $[3,2,2]$. This presents the first setting in which Theorem~\ref{thm-main} does not determine the $\phi$ primary summand of $ \hk_\phi $. This does not arise for any prime knot with up to 16 crossings. More specifically, there are 1,701,935 prime knots with up to 16 crossings. Of these, all have $\Delta_4(t) = 1$ and 656 have $\Delta_3(t)$ nontrivial. Of these, every one had $\Delta_2(t) = (t^2-t+1)^2$ and $\Delta_3(t) = t^2-t+1$. Just over 1\% of the knots, 17,769 of them, have $\Delta_2(t) \ne 1$.

\subsection{Speed of computation}\label{sec-compute} Computing the $\Delta_i(t)$ by using a standard package to diagonalize $A_K$ begins to slow down at 12 crossings and begins to fail at 15 or 16 crossings. Programs built using the three conditions in Theorem~\ref{thm-main} computed the Alexander polynomials for the roughly 3,000 prime knots of at most 12 crossings in under 4 seconds, running SageMath on an M1 iMac. There are roughly 13,000 prime knots of at most 13 crossings; these took about 20 seconds. For the roughly 47,000 prime knot with crossing number exactly 14, the computation took about 95 seconds, a speed of roughly .002 seconds per knot.

To compare the speed of the algorithm for various crossing numbers, we used the complete list of 15 crossing prime knots and used SnapPy to generate random sets of 1,000 knots for various higher crossing number. We have the following.

\begin{table}[h!]\centering
\begin{tabular}{c|c}
\textbf{Crossings} & \textbf{Avg.~Time per Knot (secs.)} \\
\hline
15 & 0.0022\\
20 & 0.0055 \\
25 & 0.0132 \\
30 & 0.0277 \\
35 & 0.0520 \\
40 & 0.0925 \\
45 & 0.1503 \\
50 & 0.2367 \\
55 & 0.3574 \\
60 & 0.5442 \\
65 & 0.7835 \\
\end{tabular}
\caption{Average computation time for random knots of various crossing number.}
\label{table-time}

\end{table}

\section{Proof of Theorem~\ref{thm-main}}\label{sec-proof} There are three statements in the theorem. We address them with three lemmas.
\begin{lemma}There is a quotient map $\qt \to \qt/\langle \phi \rangle = \Q(\omega)$, where $\omega$ is root of $\phi$. Let $A_{K,\phi}$ denote the image of $A_K$ in $\GL(\Q(\omega))$. Let $r^*$ be the dimension of the kernel of $A_{K,\phi}$ viewed as an automorphism of $\Q(\omega)^n$. Then in the primary decomposition, $r = r^*$.
\end{lemma}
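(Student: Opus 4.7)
The plan is to work with the Smith normal form of $A_K$ over the \PID{} $\qt$ and then push the factorization through the quotient map. First I would choose $U, V \in \GL(n, \qt)$ with $U A_K V = D$, where $D$ is diagonal; because $A_K$ is a square presentation matrix for the torsion module $\hk$, every diagonal entry of $D$ is nonzero, and up to units in $\qt$ its non-unit diagonal entries are precisely the invariant factors $\delta_1, \ldots , \delta_r$. Applying the Chinese remainder theorem to each cyclic summand $\qt / \langle \delta_i \rangle$ shows that the length of the $\phi$-primary decomposition equals the number of indices $i$ for which $\phi \mid \delta_i$.

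Next I would reduce the identity $U A_K V = D$ modulo $\phi$. The key point is that the units of $\qt$ are exactly the Laurent monomials $c t^k$ with $c \in \Q$ nonzero, and none of these lie in $\langle \phi \rangle$: the normalization $\delta_i(0) > 0$ forces $t \nmid \Delta$, so $\phi$ is not a unit multiple of $t$. Hence $\det(U)$ and $\det(V)$ reduce to nonzero elements of $\Q(\omega)$, so $U_\phi, V_\phi \in \GL(n, \Q(\omega))$ and $U_\phi A_{K,\phi} V_\phi = D_\phi$.

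From here the count is immediate. Invertibility of $U_\phi, V_\phi$ gives $\dim_{\Q(\omega)} \ker A_{K,\phi} = \dim_{\Q(\omega)} \ker D_\phi$, and since $D_\phi$ is diagonal its kernel dimension equals the number of diagonal entries that vanish in $\Q(\omega)$. A unit diagonal entry cannot vanish, and an invariant factor $\delta_i$ vanishes in $\Q(\omega)$ iff $\phi \mid \delta_i$ in $\qt$, so this count equals $r$, yielding $r^* = r$. The main obstacle is strictly the invertibility transfer under reduction modulo $\phi$; without the explicit description of units in $\qt$ and the observation that $\phi$ is not a unit multiple of $t$, one cannot conclude that $U_\phi$ and $V_\phi$ stay invertible. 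Everything else is a short bookkeeping exercise.
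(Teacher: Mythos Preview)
Your proof is correct and follows essentially the same route as the paper: diagonalize $A_K$ over $\qt$ via Smith normal form, reduce modulo $\phi$, and count the diagonal entries divisible by $\phi$. One small simplification: the invertibility of $U_\phi$ and $V_\phi$ follows immediately from the fact that ring homomorphisms send units to units, so there is no need to invoke $\delta_i(0)>0$ or argue that $\phi$ is not a unit multiple of $t$ (indeed $t$ is already a unit in $\qt$, so any Laurent monomial $ct^k$ maps to a unit of $\Q(\omega)$ automatically).
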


\begin{proof} There are invertible matrices $ U, V \in \GL(\qt)$ such that $UA_K V$ is in diagonal form. Conjugation by invertible matrices does not change the dimension of the kernel of the associated automorphism. It is evident from the diagonal form that this dimension is the number of diagonal entries divisible by $\phi$.
\end{proof}

\begin{lemma} Let $e^*$ denote the exponent of $\phi$ in $\Delta$. Then $\sum_{i=1}^r d_i = e^*$.
\end{lemma}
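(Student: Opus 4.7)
The plan is to route this through the invariant factor decomposition that has already been invoked earlier in the paper, using the fact that $\Delta(t) = \prod_{j=1}^r \delta_j(t)$ (up to a unit that does not affect $\phi$-adic valuations) and then using the Chinese Remainder Theorem on each cyclic summand to read off the $\phi$-primary exponents.

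First I would fix a quotient map $\qt \to \qt/\langle \phi\rangle$ and let $v_\phi$ denote the $\phi$-adic valuation on $\qt$. Since $UA_K V = D$ is diagonal with diagonal entries $\delta_1(t), \ldots, \delta_n(t)$ (with $\delta_i = 1$ for $i > r$), taking determinants yields $\det(A_K) = u(t) \prod_{j=1}^r \delta_j(t)$ for some unit $u(t) \in \qt$. Units have $v_\phi$-value $0$, and passing from $\det(A_K)$ to its normalized form $\Delta(t)$ differs only by a rational scalar and a power of $t$, both of which contribute $0$ to $v_\phi$ (since $\phi$ is a nonconstant irreducible over $\Q$, so $\phi \nmid t$). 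Therefore
\[
e^* = v_\phi(\Delta) = \sum_{j=1}^r v_\phi(\delta_j).
\]

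Next I would identify the $\phi$-primary summand. For each $j$, factor $\delta_j(t) = \phi(t)^{a_j}\cdot \psi_j(t)$ where $\gcd(\phi, \psi_j) = 1$ and $a_j = v_\phi(\delta_j)$. The Chinese Remainder Theorem gives
\[
\frac{\qt}{\langle \delta_j(t)\rangle} \;\cong\; \frac{\qt}{\langle \phi^{a_j}\rangle} \oplus \frac{\qt}{\langle \psi_j(t)\rangle},
\]
and the right-hand summand has trivial $\phi$-torsion. Summing over $j$, the $\phi$-primary part of $\hk$ is
\[
\hk_\phi \;\cong\; \bigoplus_{j\,:\,a_j > 0} \frac{\qt}{\langle \phi^{a_j}\rangle}.
\]
By uniqueness of the primary decomposition stated in the paper, the multiset $\{a_j : a_j > 0\}$ is exactly $\{d_1, \ldots, d_r\}$ (after reindexing in weakly decreasing order). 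Combining this with the valuation identity from the first step gives $\sum_{i=1}^r d_i = \sum_{j=1}^r a_j = e^*$.

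There is no real obstacle here; the argument is essentially bookkeeping once one commits to the invariant factor form $UA_K V = D$. The only point requiring mild care is verifying that the normalization passing from $\det(A_K)$ to $\Delta(t)$ does not alter $v_\phi$, which reduces to observing that the normalizing unit has the form $c\, t^m$ with $c \in \Q^\times$ and $m \in \Z$, and that $\phi(t)$ is coprime to $t$ in $\qt$ because $\phi$ is a nonconstant irreducible in $\Q[t]$.
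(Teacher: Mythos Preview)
Your proof is correct and takes essentially the same approach as the paper, which simply observes that $\Delta = \prod_i \delta_i$ and that $d_i$ equals the exponent of $\phi$ in $\delta_i$. You have spelled out the latter fact via the Chinese Remainder Theorem and uniqueness of the primary decomposition, which the paper leaves implicit; one minor notational point is that you use the symbol $r$ both for the number of nontrivial invariant factors and for the length of the $\phi$-primary sequence, but since the excess $a_j$ vanish this does no harm.
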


\begin{proof} This is immediate from the facts that $\prod_{i =1}^n \delta_i (t)= \Delta(t)$ and that $d_i$ is the exponent of $\phi$ in $\delta_i(t)$ for $1 \le i \le r$.
\end{proof}

\begin{lemma}
Let $d_1^*$ denote the exponent of $\phi$ in the $\LCM$ of the denominators of the reduced entries of $A_K^{-1} \in \GL(\Q(t))$. Then $d_1 = d_1^*$.
\end{lemma}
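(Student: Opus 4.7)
The plan is to diagonalize $A_K$ and read off the LCM of denominators of $A_K^{-1}$ directly from the Smith normal form. The key observation that makes this easy is that the nesting condition $\delta_{i+1} \mid \delta_i$ implies $\delta_j \mid \delta_1$ for every $j$, so $\delta_1$ is simultaneously the first invariant factor and the $\LCM$ of all of them (up to a unit of $\qt$). Consequently, the exponent of $\phi$ in $\delta_1$, which by definition is $d_1$, is the exponent of $\phi$ in this $\LCM$.

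Concretely, choose $U, V \in \GL(\qt)$ with $UA_KV = D$, where $D$ is diagonal with entries $\delta_1(t), \ldots, \delta_r(t)$ followed by $1$'s. Then $A_K^{-1} = V D^{-1} U$. Let $L \in \qt$ denote the $\LCM$ of the denominators of the reduced entries of $A_K^{-1}$; since $\qt$ is a UFD, $L$ is characterized (up to a unit of $\qt$) as the smallest polynomial for which $L \cdot A_K^{-1} \in M_n(\qt)$. The claim is that $L = \delta_1$ up to a unit of $\qt$, from which $d_1^* = d_1$ follows by taking $\phi$-adic valuations.

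For the direction $L \mid \delta_1$: the matrix $\delta_1 D^{-1}$ is diagonal with entries $\delta_1/\delta_i$, each of which lies in $\qt$ by the divisibility chain above, so $\delta_1 \cdot A_K^{-1} = V(\delta_1 D^{-1}) U$ has entries in $\qt$. For the reverse direction $\delta_1 \mid L$: if $L \cdot A_K^{-1} \in M_n(\qt)$, then $L \cdot D^{-1} = V^{-1}(L \cdot A_K^{-1}) U^{-1}$ also has entries in $\qt$, since $U, V \in \GL(\qt)$; its diagonal entries $L/\delta_i$ therefore force $\delta_i \mid L$ for every $i$, and hence $\delta_1 \mid L$.

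The one subtle point, and really the only place where bookkeeping could go awry, is the identification of the $\LCM$ of the reduced denominators with the minimal polynomial that clears all denominators when multiplying $A_K^{-1}$; this is a routine consequence of unique factorization in $\qt$. Everything else reduces to the Smith normal form and the divisibility chain among the $\delta_i$, both of which are immediate.
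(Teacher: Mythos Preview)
Your proof is correct and follows essentially the same route as the paper: diagonalize $A_K$ to $D$ via invertible matrices over $\qt$, observe that the $\LCM$ of the denominators of $D^{-1}$ is $\delta_1$, and then argue that conjugating by matrices in $\GL(\qt)$ preserves this $\LCM$. Your formulation via the minimal denominator-clearing polynomial is a slightly cleaner packaging of the same invariance argument the paper gives entrywise.
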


\begin{proof} The result is clear if $A_K$ equals its diagonalization $D$. In fact, the $\LCM$ of the denominators of $D^{-1}$ is precisely $\delta_1(t)$. The result now follows from the observation that if matrices $X, Y \in \text{GL}(\qt)$ are related by the equation $Y = UXV$ with $U, V \in \text{GL}(\qt)$ and both $U$ and $V$ have determinant 1, then the $\LCM$ of the denominators of $X^{-1}$ and $Y^{-1}$ are the same.

We have $Y^{-1} = V^{-1}X^{-1}U^{-1}$. Thus each entry of $Y^{-1}$ is a linear combination of elements in $X^{-1}$ with coefficients in $\qt$. The denominator of that entry in $Y^{-1}$ must divide the denominators of elements in $X^{-1}$. Thus, the $\LCM$ of the denominators of $Y^{-1}$ must divide the $\LCM$ of the denominators of $X^{-1}$. Reversing the roles of $X$ and $Y$ shows that the two $\LCM$s are the same.
\end{proof}

\section{Low crossing prime knots with nontrivial $\Delta_2(t)$ and $\Delta_3(t)$}
In~\cite[Table 1]{MR3156509}, all prime knots with up to 10 crossings and nontrivial $\Delta_2(t)$ are indicated. Table~\ref{11-table} presents all prime knots having up to 11 crossings for which $\Delta_2(t)$ is nontrivial, giving the values of $\Delta_1(t)$ and $\Delta_2(t)$.

No knots with $\Delta_3(t) \ne 1$ arise among prime knots in this range. In fact, the first knot for which $\Delta_3(t)$ is nontrivial is $14a_{1975}$, having $\Delta_1(t) = (5t^2 - 9t+5)(t^2-t+1)^3$, $\Delta_2(t) = (t^2-t+1)^2$, and $\Delta_3(t) = t^2 - t+1$. There are 1,701,935 prime knots with crossing number at most 16.

\begin{table}[h]
\centering
\renewcommand{\arraystretch}{1.3}
\setlength{\tabcolsep}{6pt}
\setlength{\arrayrulewidth}{0.3pt} 
\begin{tabular}{|l|l|l|}
\hline
{Knot} & ${\Delta_1(t)}$ & $ {\Delta_2(t)}$ \\
\hline
$8a_{18}$ & $(t^2 - 3t + 1)(t^2 - t + 1)^2$ & $t^2 - t + 1$ \\
\hline
$9a_{40}$ & $(t^2 - t + 1)(t^2 - 3t + 1)^2$ & $t^2 - 3t + 1$ \\
\hline
$10a_{98}$ & $(t - 2)(2t - 1)(t^2 - t + 1)^2$ & $t^2 - t + 1$ \\
\hline
$10a_{99}$ & $(t^2 - t + 1)^4$ & $(t^2 - t + 1)^2$ \\
\hline
$10a_{123}$& $(t^4 - 3t^3 + 3t^2 - 3t + 1)^2$ & $t^4 - 3t^3 + 3t^2 - 3t + 1$ \\
\hline
$11a_{43}$ & $(4t^2 - 7t + 4)(t^2 - t + 1)^2$ & $t^2 - t + 1$ \\
\hline
$11a_{44}$ & $(t^2 - t + 1)^2(t^4 - 3t^3 + 5t^2 - 3t + 1)$ & $t^2 - t + 1$ \\
\hline
$11a_{47}$ & $(t^2 - t + 1)^2(t^4 - 3t^3 + 5t^2 - 3t + 1)$ & $t^2 - t + 1$ \\
\hline
$11a_{57}$ & $(t^2 - t + 1)^2(t^4 - 3t^3 + 3t^2 - 3t + 1)$ & $t^2 - t + 1$ \\
\hline
$11a_{231}$ & $(t^2 - t + 1)^2(t^4 - 3t^3 + 3t^2 - 3t + 1)$ & $t^2 - t + 1$ \\
\hline
$11a_{263}$ & $(t^2 - t + 1)^2(2t^4 - 2t^3 + t^2 - 2t + 2)$ & $t^2 - t + 1$ \\
\hline
$11a_{297}$ & $(2t^2 - 3t + 2)(t^2 - 3t + 1)^2$ & $t^2 - 3t + 1$ \\
\hline
$11a_{332}$ & $(t^2 - t + 1)^2(t^4 - 5t^3 + 9t^2 - 5t + 1)$ & $t^2 - t + 1$ \\
\hline
$11n_{71}$ & $(2t^2 - 3t + 2)(t^2 - t + 1)^2$ & $t^2 - t + 1$ \\
\hline
$11n_{72}$ & $(t - 2)(2t - 1)(t^2 - t + 1)^2$ & $t^2 - t + 1$ \\
\hline
$11n_{73}$ & $(t^2 - t + 1)^2$ & $t^2 - t + 1$ \\
\hline
$11n_{74}$ & $(t^2 - t + 1)^2$ & $t^2 - t + 1$ \\
\hline
$11n_{75}$ & $(2t^2 - 3t + 2)(t^2 - t + 1)^2$ & $t^2 - t + 1$ \\
\hline
$11n_{76}$ & $(t^2 - t + 1)^2(t^4 - t^3 + t^2 - t + 1)$ & $t^2 - t + 1$ \\
\hline
$11n_{77}$ & $(t^2 - t + 1)^2(t^4 + t^3 - 3t^2 + t + 1)$ & $t^2 - t + 1$ \\
\hline
$11n_{78}$ & $(t^2 - t + 1)^2(t^4 - t^3 + t^2 - t + 1)$ & $t^2 - t + 1$ \\
\hline
$11n_{81}$ & $(t^2 - t + 1)^2(t^4 - t^3 - t^2 - t + 1)$ & $t^2 - t + 1$ \\
\hline
$11n_{164}$ & $(t^2 - 3t + 1)(t^2 - t + 1)^2$ & $t^2 - t + 1$ \\
\hline
\end{tabular}
\caption{Prime knots with nontrivial $\Delta_2(t)$ (with corresponding $\Delta_1(t)$ values).}
\label{11-table}
\end{table}


\bibliographystyle{amsplain}
\bibliography{../BibTexComplete.bib}

\end{document}